\documentclass[a4paper, 12pt]{amsart}
\pdfoutput=1
\usepackage{amsmath, amssymb, amsthm, amsfonts}
\usepackage[mathscr]{eucal}
\usepackage{verbatim}
\usepackage{enumerate}
\usepackage{color}

\newcommand{\C}{\mathbb{C}}
\newcommand{\R}{\mathbb{R}}

\newcommand{\Hilb}{\mathcal{H}}

\newtheorem{lemma}{Lemma}[section]
\newtheorem{theorem}[lemma]{THEOREM}
\newtheorem{definition}[lemma]{Definition}

\setlength{\parskip}{2ex}
\setlength{\parindent}{0ex}

\title{Admissibility for $\alpha$-Modulation Spaces}
\author{Peter Balazs, Dominik Bayer and Michael Speckbacher}
\thanks{Acoustics Research Institute, Austrian Academy of Sciences, Wohllebengasse 12-14, 1040 Wien, Austria}
%\date{\today}

%\pagestyle{empty}

\begin{document}

\begin{abstract}

This paper is concerned with frame decompositions of $\alpha$-modulation spaces. These spaces can be obtained as coorbit spaces for square-integrable representations of the affine Weyl-Heisenberg group modulo suitable subgroups. The theory yields canonical constructions for Banach frames or atomic decompositions in $\alpha$-modulation spaces. A necessary ingredient in this abstract machinery is the existence of generating functions that are admissible for the representation.

For numerical purposes, admissible atoms with compact support are necessary. We show new admissibility conditions that considerably improve upon known results. In particular, we prove the existence of admissible vectors that have compact support in time domain.

\end{abstract}

\maketitle
%\thispagestyle{empty}

%%%%%%%%%%%%%%%%%%%%%%%%%%%%%%%%%%%%%%%%%%%

\section{Introduction}

To know where a sound comes from can be essential for humans, in particular in traffic situation. We use the particular filtering effect of our ears, the head and torso to decide, where a signal comes from. These so-called head related transfer functions (HRTFs) \cite{Moelleretal95} are different for each person. The individual difference becomes important for up/down and front/back decisions, necessitating measuring or estimating HRTFs for each subject. HRTF measurements can be quite time-consuming (for an improved method see e.g. \cite{Majdaketal07}). Another way to obtain them is to use a numerical simulation, using the boundary element method (BEM) on a 3D model of the head \cite{Kreuzeretal09b}. The boundary integral operators for the Helmholtz equation have to be discretized, reduced to finite dimension and the integrals to be computed by quadrature to assemble the stiffness matrix need to be solved on a domain with finite support (in general, triangular or quadrilateral patches). It is part of the multi-national, multi-disciplinary project BIOTOP to address the question how frames can be used to design efficient numerical methods to solve the boundary integral equation for the Helmholtz operator.

For the discretization respectively representation of operators (and functions) orthonormal or Riesz bases are often used. An alternative approach is to use frames, which give non-unique coefficients, but still allow a representation of functions \cite{ole1} and operators \cite{xxlframoper1}. Due to their high redundancy and thus flexibility, these systems have a higher probability to produce sparse representations. Furthermore, redundant systems are robust with respect to loss of coefficients.

In acoustics, harmonic components which are well represented by Gabor frames \cite{feistro1} as well as impact sounds which are well represented by wavelet frames \cite{daubech1}, are important. For example, when dealing with acoustic scattering, the solution of the scattering problem can contain harmonic as well as non-smooth components, depending on the geometry of the scatterer. So it seems natural to use a representation, which is in some sense intermediate between the Gabor and wavelet setting, combining the strengths of both. This can be achieved using $\alpha$-modulation frames.

These are frames associated with representations of the affine Weyl-Heisenberg group. Since there do in fact not exist any square-integrable representations of this group, the theory has to be generalized further to include square-integrable representations modulo quotients \cite{DahForRau+2008}.

The so-called $\alpha$-modulation spaces are particular function spaces that can be obtained as coorbit spaces associated to those representations. They were originally introduced and defined by Gr{\"o}bner in 1992 using Fourier domain decomposition methods \cite{Gr_92,FeiGr_85}. The $\alpha$-modulation spaces depend on a parameter $\alpha \in [0,1]$. For $\alpha = 0$, $\alpha$-modulation spaces coincide precisely with the so-called modulation spaces that are defined in terms of Gabor analysis; for $\alpha = 1$, the spaces become Besov spaces, the smoothness spaces associated to the continuous wavelet transform.  It is to be expected that $\alpha$-modulation frames for some intermediate $\alpha$ between $0$ and $1$ might be able to preserve and combine the good properties of both paradigms into one single framework. This idea has already been used successfully in applications, see e.g. \cite{DahTes08}. 

For numerical applications, finite domains (e.g. the surface of a scatterer) are necessary, so the assumption of a band-limited generating function as in \cite{DahForRau+2008} to show the admissibility cannot be taken. In this paper we lift this assumption, substituting it by a weaker condition, namely just a certain decay at infinity of the Fourier transform. 

The paper is organized as follows. In Section \ref{sec:prel0} we review basic facts about generalized representation theory modulo subgroups, coorbit theory and $\alpha$-modulation spaces. In Section \ref{sec:admis0}, we state our main result on the admissibility for possibly non-band-limited generating functions. In particular, we prove the existence of admissible atoms that are compactly supported in time.

%%%%%%%%%%%%%%%%%%%%%%%%%%%%%%%%%%%%%%%%%%%%
\begin{comment}

{\color{red} To do for Peter: Introductory blabla, containing the following:
\begin{itemize}
\item Motivation: BIOTOP
\item $\alpha$-mod spaces: function spaces, originally defined by Groebner by some Fourier domain decomp method, but we use coorbit theory approach (see preliminaries)
\item $\alpha$-mod spaces interpolate in a certain sense between Gabor ($\alpha = 0$) and wavelet ($\alpha = 1$); Gabor good for periodic signals, wavelet good for singularities; hopefully, for intermediate $\alpha$ it's a mixture (this is our motivation do use it); already been used before in practice in some paper by Dahlke, Teschke and Stingl, \cite{Da_Te_St_2008}
\item Outline of paper: 1. Intro, 2. Preliminaries on (generalized) representation theory and coorbit spaces, 3. Our new results on admissibility (generalizing and improving upon existing results of Dahlke et al., \cite{Da_et_al_2008})
\end{itemize}
}

\end{comment}
%%%%%%%%%%%%%%%%%%%%%%%%%%%%%%%%%%%%%%%%%%%

\section{Preliminaries} \label{sec:prel0}

%%%%%%%%%%%%%%%%%%%%%%%%%%%%%%%%%%%%%%%%%%%

\subsection{Representation Theory Modulo Subgroups}

We first give a short outline of the theory of square-integrable group representations, see e.g. \cite{fo95} or \cite{wo02}.
Let $G$ be a locally compact Hausdorff topological group. It is well known that for such groups there always exists a nonzero Radon measure $\mu$, unique up to a constant factor, that is invariant under left translation. This measure is the so-called \emph{(left) Haar measure} of $G$. If the left Haar measure is simultaneously a right Haar measure as well (i.e. it is invariant under right translations), we call the group \emph{unimodular}. Let $\Hilb$ be a separable complex Hilbert space with inner product $\langle\cdot, \cdot \rangle$ and norm $\| \cdot \|$. Denote by $\mathcal{U}(\Hilb)$ the group of unitary operators on $\Hilb$. A \emph{unitary representation} of $G$ on $\Hilb$ is a strongly continuous group homomorphism $\pi:G \to \mathcal{U}(\Hilb)$, i.e. a mapping $\pi:G \to \mathcal{U}(\Hilb)$ such that
\begin{enumerate}
\item $\pi(gh) = \pi(g)\pi(h)$ for all $g,h \in G$, and
\item for every $\phi \in \Hilb$, the mapping $G \to \Hilb$, $g \mapsto \pi(g)\phi$ is continuous.
\end{enumerate}
The group representation is \emph{irreducible} if it has only the trivial invariant subspaces $\{0\}$ and $\Hilb$, i.e. the only closed subspaces $M \subseteq \Hilb$ such that $\pi(g)(M) \subseteq M$ for every $g\in G$ are $M=\{0\}$ or $M = \Hilb$.
The group representation is said to be \emph{square-integrable} if it is irreducible and there exists a vector $\phi \not= 0$ in $\Hilb$ such that 
\[
c_\phi := \int_G \left| \langle \phi, \pi(g)\phi \rangle \right|^2\,d\mu(g) < \infty.
\]
Such a vector $\phi$ with $\| \phi \| = 1$ is called an \emph{admissible wavelet}. Its associated \emph{wavelet constant} is $c_\phi$. For an admissible wavelet $\phi$ and $f \in \Hilb$, the \emph{voice transform} or \emph{generalized wavelet transform} $V_{\phi}f:G \to \C$ is defined as
\[
V_{\phi} f(g) := \langle f, \pi(g)\phi \rangle,
\]
for $g \in G$. By square-integrability, we have $V_{\phi}f \in L^2(G, \mu)$, thus $V_{\phi}: \Hilb \to L^2(G,\mu)$. The adjoint of this mapping is
\[
V^\ast_\phi: L^2(G,\mu) \to H, F \in L^2(G,\mu) \mapsto V^\ast_\phi F = \int_G F(g) \pi(g)\phi\,d\mu(g) \in \Hilb,
\]
to be understood in weak sense as
\[
\langle V^\ast_\phi F, h \rangle = \int_G F(g) \langle \pi(g)\phi, h \rangle \,d\mu(g)
\]
for all $h \in \Hilb$.
If $\pi:G \to \mathcal{U}(\Hilb)$ is irreducible and square-integrable, and $\phi\in\Hilb$ is admissible, then we have the following \emph{resolution of the identity}: for all $f \in \Hilb$,
\[
\frac{1}{c_\phi} V_\phi^\ast ( V_{\phi}f ) = \frac{1}{c_\phi} \int_G \langle f, \pi(g)\phi \rangle \pi(g)\phi\,d\mu(g) = f,
\]
that means the \emph{reproducing formula}
\[
\frac{1}{c_\phi} \int_G \langle f, \pi(g)\phi \rangle \langle \pi(g)\phi, h \rangle\,d\mu(g) = \langle f, h \rangle
\]
holds for all $f,h \in \Hilb$. One may interpret the family $\{ \pi(g)\phi\,:\,g \in G\}$ as a \emph{continuous frame}, with $V_\phi$ the \emph{analysis operator}, $V^\ast_\phi$ the \emph{synthesis operator} and $A_{\phi} := V^{\ast}_\phi( V_\phi)$ the \emph{frame operator}.

From here, one proceeds to build classical coorbit theory, as explained in \cite{FeiGro_89, FeiGro_89-2}.

In many cases, however, representations of a group are not square-integrable. The usual informal  interpretation of this fact is that the group is, in a certain sense, too large. Following
\cite{DahForRau+2008}, the subsequent technique may be used to make the group smaller: choose a suitable closed subgroup $H$ and factor out, forming the quotient $G/H$. In general, $H$ need not be a normal subgroup, so that $G/H$ will, in general, not carry a group structure; it is a \emph{homogeneous space}, though, i.e. the group $G$ acts on $G/H$ continuously and transitively by left translation. The quotient can always be equipped, in a natural way, with a measure $\mu$ that is \emph{quasi-invariant} under left translations, i.e. $\mu$ and all its left-translates have the same null sets. In many examples the measure $\mu$ will be translation-invariant in the first place. In order to transfer the representation from the group to the quotient, one then introduces a measurable \emph{section} $\sigma: G/H \to G$ which assigns to each coset a point lying in it. We can then generalize admissibility and square-integrability for representations modulo subgroups in the following definition. 

\begin{definition}\label{D:admissible_abstract}
Let $G$ be a locally compact group, $\pi:G \to \mathcal{U}(\mathcal{H})$ a unitary representation, $H$ a closed subgroup of $G$, and $X = G/H$, equipped with a (quasi-)invariant measure $\mu$. Let $\sigma:X \to G$ be a section and $\psi \in \mathcal{H} \setminus \{0\}$. Define the operator $A_{\sigma, \psi}$ on $\mathcal{H}$ (weakly) by
\[
A_{\sigma, \psi}f := \int_X \langle f, \pi(\sigma(x))\psi \rangle \pi(\sigma(x))\psi\,d\mu(x), \quad f \in \mathcal{H}.
\]
If $A_{\sigma, \psi}$ is bounded and boundedly invertible, then $\psi$ is called \textbf{admissible}, and the unitary representation $\pi$ is called \textbf{square-integrable modulo $(H, \sigma)$}.
\end{definition}

There is also a generalization of the voice transform in this setting.

\begin{definition}\label{D:voice_abstract}
Let $\psi$ be admissible. Then the \textbf{voice transform} of $f \in \mathcal{H}$ is defined by
\[
V_{\psi}\,f(x) := \langle f, \pi(\sigma(x))\psi \rangle, \quad x \in X.
\]
We further define a \textbf{second transform}
\[
W_{\psi}\,f(x) := V_{\psi}(A_{\sigma, \psi}^{-1}\,f)(x) = \langle f, A_{\sigma, \psi}^{-1}\,\pi(\sigma(x))\psi \rangle, \quad x \in X.
\]
\end{definition}

We have then the following version of the reproducing formula (see formula (2.4) in \cite{DahForRau+2008}).

\begin{theorem}
Let $\psi$ be admissible for the representation $\pi$ modulo $(H,\sigma)$. Then, for all $f, g \in \Hilb$,
\[
\langle f, g \rangle = \langle W_\psi f, V_\psi g \rangle_{L^2(G,\mu)} = \langle V_\psi f, W_\psi g \rangle_{L^2(G,\mu)}.
\]
\end{theorem}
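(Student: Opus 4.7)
The plan is to unfold both sides of the claimed equalities using the explicit definitions of $V_\psi$, $W_\psi$ and $A_{\sigma,\psi}$, and then to recognise the resulting integrals as weak applications of the frame operator. Before starting, I would note two structural observations. First, the weak formula
\[
\langle A_{\sigma,\psi} f, h\rangle = \int_X \langle f,\pi(\sigma(x))\psi\rangle \overline{\langle h,\pi(\sigma(x))\psi\rangle}\,d\mu(x)
\]
is manifestly hermitian in $(f,h)$, so $A_{\sigma,\psi}$ is self-adjoint; combined with admissibility (bounded invertibility), this makes $A_{\sigma,\psi}^{-1}$ self-adjoint as well. Second, for every $f\in\Hilb$ the identity $\|V_\psi f\|_{L^2(X,\mu)}^2 = \langle A_{\sigma,\psi}f,f\rangle \le \|A_{\sigma,\psi}\|\,\|f\|^2$ shows that $V_\psi f$ (and hence also $W_\psi f = V_\psi(A_{\sigma,\psi}^{-1}f)$) actually lies in $L^2$, so the pairings in the statement are well defined.

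For the first equality, I would write out
\[
\langle W_\psi f, V_\psi g\rangle_{L^2} = \int_X \langle A_{\sigma,\psi}^{-1}f,\pi(\sigma(x))\psi\rangle\,\langle \pi(\sigma(x))\psi, g\rangle\,d\mu(x),
\]
and interpret the right-hand side via the weak definition of $A_{\sigma,\psi}$ applied to the vector $A_{\sigma,\psi}^{-1}f$: this gives $\langle A_{\sigma,\psi}(A_{\sigma,\psi}^{-1}f), g\rangle = \langle f,g\rangle$. For the second equality, an analogous expansion yields
\[
\langle V_\psi f, W_\psi g\rangle_{L^2} = \int_X \langle f,\pi(\sigma(x))\psi\rangle\,\langle \pi(\sigma(x))\psi, A_{\sigma,\psi}^{-1}g\rangle\,d\mu(x) = \langle A_{\sigma,\psi}f, A_{\sigma,\psi}^{-1}g\rangle,
\]
and the self-adjointness of $A_{\sigma,\psi}^{-1}$ allows this to be rewritten as $\langle A_{\sigma,\psi}^{-1}A_{\sigma,\psi}f, g\rangle = \langle f,g\rangle$.

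I expect the only real obstacle to be the rigorous justification of the interchange between the scalar integral and the inner product when identifying the above integrals with $\langle A_{\sigma,\psi}(\cdot),\cdot\rangle$. This is precisely what the ``weak sense'' convention for $A_{\sigma,\psi}$ is designed to handle, and it is legitimate here because the integrand is the product of two $L^2(X,\mu)$ functions (the voice transforms of $A_{\sigma,\psi}^{-1}f$ and $g$, respectively $f$ and $A_{\sigma,\psi}^{-1}g$), so both integrals converge absolutely by Cauchy--Schwarz. Everything else — self-adjointness of $A_{\sigma,\psi}$ and of its inverse, and the cancellation $A_{\sigma,\psi}A_{\sigma,\psi}^{-1}=\mathrm{Id}$ — is immediate from admissibility.
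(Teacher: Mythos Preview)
Your argument is correct: unfolding the definitions, recognising the integrals as weak evaluations of $A_{\sigma,\psi}$, and invoking self-adjointness of $A_{\sigma,\psi}$ (hence of its inverse) is exactly what is needed, and your remarks on why the $L^2$ pairings are well defined and why the integrals converge absolutely are to the point. The paper itself does not give a proof of this theorem --- it merely cites formula~(2.4) of \cite{DahForRau+2008} --- so there is no in-paper argument to compare against; your write-up is the standard proof one would expect.
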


The construction of coorbit spaces for the generalized setting proceeds from here analogously to the classical case, see \cite{DahForRau+2008}.

%%%%%%%%%%%%%%%%%%%%%%%%%%%%%%%%%%%%%%%%%%%%%

\subsection{The Setting for $\alpha$-Modulation Spaces}

It turns out that $\alpha$-modulation spaces can be constructed as coorbit spaces in the setting of generalized representation theory modulo subgroups for a particular group, the \emph{affine Weyl-Heisenberg group}.

We make precise the abstract theory of the preceding subsection for that case. Again, we closely follow \cite{DahForRau+2008}.

Denote by $\R_+$ the set of positive real numbers. The \emph{affine Weyl-Heisenberg group} is the set
\[ G_{aWH} := \R^2 \times \R_+ \times \R
\]
together with composition law
\begin{align*}
(x,\omega, a, \tau) & \circ (x^{\prime}, \omega^{\prime}, a^{\prime}, \tau^{\prime}) = ( x + ax^{\prime}, \omega + \frac{1}{a}\omega^{\prime}, aa^{\prime}, \tau + \tau^{\prime} + \omega a x^{\prime} ).
\end{align*}
Equipped with the usual product topology of the respective Euclidean topologies on $\R$ and $\R_+$, this becomes a (non-abelian) locally compact Hausdorff topological group. The neutral element is $(0,0,1,0)$, and the inverse to $(x,\omega, a, \tau)$ is $(x, \omega, a, \tau)^{-1} = (-\frac{x}{a}, -\omega a, \frac{1}{a}, - \tau + x\omega)$. The Haar measure is given by
\[ d\mu(x,\omega, a,\tau) = dxd\omega\frac{da}{a}d\tau.
\]
This is in fact both a left and right Haar measure on $G_{aWH}$, thus the group is \emph{unimodular}.
%One may also consider a reduced version of $G_{aWH}$. The \emph{center} of $G_{aWH}$, the set $Z = \{ (0,0,1,\tau) | \tau \in \R \} \subseteq G_{aWH}$, is a normal subgroup. Form the quotient group $G_{aWH} / Z$. This group is isomorphic to the set $\R^2 \times \R_+$, together with the composition
%\begin{align*}
%(x,\omega, a) & \circ (x^{\prime}, \omega^{\prime}, a^{\prime}) = ( x + ax^{\prime}, \omega + \frac{1}{a}\omega^{\prime}, aa^{\prime}).
%\end{align*}
%This group is called the \emph{reduced affine Weyl-Heisenberg group} $G_{raWH}$.

We define the "usual three" operators, \emph{translation}: $T_xf(t) = f(t-x)$, \emph{modulation}: $M_{\omega}f(t) = e^{2\pi i \omega t}f(t)$, and \emph{dilation}: $D_af(t) = \frac{1}{\sqrt{a}}f(\frac{t}{a})$ (with $x, \omega \in \R$ and $a \in \R_+$). These are unitary operators on $L^2(\R)$. Using them, we define the \emph{Stone-Von Neumann representation}, given by
\[
\pi:G_{aWH} \to \mathcal{U}(L^2(\R)), \quad\quad \pi(x, \omega, a, \tau) = e^{2\pi i \tau} T_x M_{\omega} D_a.
\]
This constitutes a unitary representation of $G_{aWH}$, but unfortunately not a square-integrable one.

The subset
\[
H = \{ (0,0,a,\tau) \} \subseteq G_{aWH}.
\]
is a closed subgroup of the affine Weyl-Heisenberg group, although not a normal subgroup. Define the quotient
\[
X := G_{aWH} / H \simeq \R^2.
\]
This is not a group but a \emph{homogeneous space}. It carries the measure $dxd\omega$ which is in fact a truly \emph{invariant} measure under left translations on $X$.

For $0 \leq \alpha < 1$, choose the \emph{section}
\[
\sigma:X \to G_{aWH}, \quad \quad \sigma(x,\omega) = (x,\omega, \beta(\omega), 0)
\]
with
\[
\beta(\omega) = (1 + |\omega |)^{-\alpha}.
\]

One can then show that, for $\psi \in L^2(\R)$, the operator from Definition \ref{D:admissible_abstract} is in this case a Fourier multiplier, in general unbounded, but densely defined:
\[
\widehat{A_{\sigma, \psi} f} = m \cdot \hat{f}
\]
for $f \in \mbox{dom}(A_{\sigma, \psi}) \subseteq L^2(\R)$ a dense subspace, with \emph{symbol}
\[
m(\xi) = \int_{\R} | \hat{\psi}(\beta(\omega)(\xi - \omega)) |^2 \beta(\omega)\,d\omega.
\]
Thus the admissibility of $\psi \in L^2(\R)$ is equivalent to boundedness and invertibility of the Fourier multiplier $A_{\sigma, \psi}$, which is in turn equivalent to the existence of constants $A, B$ such that
\[ 
0 < A \leq m(\xi) \leq B < \infty \tag{$\ast$}
\]
for almost all $\xi \in \R$.

\section{Admissibility} \label{sec:admis0}

%%%%%%%%%%%%%%%%%%%%%%%%%%%%%%%%%%%%%%%%%%%%%%%%

%{\color{red} This section is complete, IMHO}

We prove a new admissibility condition for $\alpha$-modulation spaces. This generalizes results previously obtained by Dahlke et al. More precisely, whereas in \cite{DahForRau+2008} it was shown that band-limited functions, that is functions with compactly supported Fourier transform, are admissible, we prove that it suffices to demand just a certain decay of the Fourier transform. In particular, we find admissible functions that are compactly supported in time.

In the following, we always set
\[
\beta(\omega) = (1 + |\omega|)^{-\alpha},
\]
with fixed $\alpha \in [0,1)$. Two simple properties of $\beta$, that we will often use without further comment in the sequel, are
\begin{enumerate}[(i)]
\item $\beta$ is symmetric, i.e. $\beta(\omega) = \beta(-\omega)$ for all $\omega \in \R$;
\item $\beta$ is bounded: $0 < \frac{1}{1 + |\omega |} \leq \beta(\omega) \leq 1$ for all $\omega \in \R$.
\end{enumerate}

Our result is

\begin{theorem}\label{T:2.1}
If $\psi \in L^2(\R) \setminus \{ 0 \}$ is such that $\hat{\psi}$ is continuous and 
\[
|\hat{\psi}(\xi)| \leq C (1+|\xi|)^{-r}
\]
for all $\xi \in \R$, with 
\[
r > \max\{ 1, \frac{\alpha}{2(1-\alpha)} \},
\]
then $\psi$ is admissible.
\end{theorem}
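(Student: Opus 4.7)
By the criterion $(\ast)$ recalled just above, admissibility of $\psi$ is equivalent to proving that the symbol $m(\xi) = \int_\R |\hat{\psi}(\beta(\omega)(\xi-\omega))|^2\beta(\omega)\,d\omega$ satisfies $0 < A \leq m(\xi) \leq B < \infty$ for almost every $\xi \in \R$. I would treat the upper and lower bounds separately.

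For the upper bound, I plug the decay hypothesis into the integral to obtain $m(\xi) \leq C^2 \int_\R (1+\beta(\omega)|\xi-\omega|)^{-2r}\beta(\omega)\,d\omega$, and decompose the $\omega$-integration according to the relative size of $|\omega|$ and $|\xi|$, since the integrand's scale depends on $\omega$. A natural splitting (for $|\xi|$ large; the bounded case is routine) is: (i) a ``far'' regime $|\omega|\geq 2|\xi|$, where $|\xi-\omega|\geq|\omega|/2$ gives $\beta(\omega)|\xi-\omega|\gtrsim |\omega|^{1-\alpha}$ and hence an integrable tail as soon as $2r(1-\alpha)+\alpha>1$; (ii) a ``diagonal'' regime $|\xi|/2\leq |\omega|\leq 2|\xi|$, where $\beta(\omega)\sim (1+|\xi|)^{-\alpha}$ is essentially constant, so one splits further at the critical scale $|\omega-\xi|\sim (1+|\xi|)^\alpha$ at which $\beta(\omega)|\xi-\omega|$ crosses $1$; and (iii) an ``opposite side'' regime $|\omega|\leq |\xi|/2$, where $|\xi-\omega|\sim |\xi|$. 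In each piece a direct estimate combining the polynomial bound with the Lebesgue measure of the relevant set gives a contribution that is $O(1)$ in $\xi$; the restriction $r > \alpha/(2(1-\alpha))$ is what balances the length $(1+|\xi|)^\alpha$ of the near-diagonal subset in region (ii) against the tail decay of $\hat{\psi}$.

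For the lower bound, continuity of $\hat{\psi}$ together with $\psi\neq 0$ yields $\eta_0 \in \R$ and constants $\epsilon,\delta>0$ with $|\hat{\psi}(\eta)|^2\geq\delta$ whenever $|\eta-\eta_0|\leq\epsilon$. The function $g_\xi(\omega) := \beta(\omega)(\xi-\omega)$ is continuous and tends to $\pm\infty$ as $\omega\to\mp\infty$ (since $\alpha<1$ forces $\beta(\omega)|\omega|\to\infty$), so by the intermediate value theorem there is $\omega_0$ with $g_\xi(\omega_0)=\eta_0$. A direct computation of $g_\xi'(\omega_0) = \beta'(\omega_0)(\xi-\omega_0) - \beta(\omega_0)$ shows $|g_\xi'(\omega_0)|\asymp (1+|\xi|)^{-\alpha}$ for $|\xi|$ large, so by local invertibility the preimage of $(\eta_0-\epsilon,\eta_0+\epsilon)$ contains an interval of length $\gtrsim(1+|\xi|)^\alpha$ on which $\beta(\omega)\sim(1+|\xi|)^{-\alpha}$; integrating, its contribution to $m(\xi)$ is bounded below by a positive constant independent of $|\xi|$. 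For bounded $\xi$, pointwise positivity of $m$ together with its continuity (which follows from the upper bound by dominated convergence) yields a positive lower bound on compact sets.

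The hard part will be the upper bound. Because the argument of $\hat{\psi}$ is $\beta(\omega)(\xi-\omega)$ with an $\omega$-dependent scaling, there is no clean global substitution or convolution representation, and the regional decomposition has to be carried out by hand. The most delicate case is $\alpha$ close to $1$, where the near-diagonal set in region (ii) stretches like $(1+|\xi|)^\alpha$ while $\beta(\omega)$ is nearly constant there, and only the strong decay assumption $r > \alpha/(2(1-\alpha))$ prevents the near-diagonal contribution from growing with $|\xi|$.
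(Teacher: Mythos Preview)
Your plan is sound and would yield a valid proof, but it is organized differently from the paper's argument.

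The paper does not treat the upper and lower bounds separately. After reducing to $\xi\ge 0$ by the symmetry $m_\psi(-\xi)=m_{\overline\psi}(\xi)$ and to large $\xi$ by showing that $m$ is continuous and strictly positive (this matches your compact-set argument), it proves the single statement
\[
\lim_{\xi\to\infty} m(\xi)=\|\psi\|^2,
\]
which delivers both bounds at once. The decomposition is not dyadic in $|\omega|$ versus $|\xi|$ but is tailored to the map $r_\xi(\omega)=\beta(\omega)(\xi-\omega)$: one locates the unique critical point $\omega_\xi^*=(1-\alpha\xi)/(1-\alpha)<0$ of $r_\xi$, places a window $I_2$ of width $\sim\xi^\alpha$ around it, and on the three remaining monotonicity intervals performs the substitution $z=r_\xi(\omega)$. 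The three pieces on the negative half-line are shown to tend to $0$, while on $[0,\infty)$ the substitution combined with a uniform-convergence lemma for the Jacobian factor gives the limit $\|\psi\|^2$. Your route is more direct for the stated goal and avoids the limit computation; the paper's gives the sharper asymptotic information and makes systematic use of the change of variables. Your lower-bound idea (pick the positive solution $\omega_0\approx\xi$ of $r_\xi(\omega_0)=\eta_0$ and use $|r_\xi'|\asymp\xi^{-\alpha}$ near it) is essentially the inverse of what the paper exploits on $I_4=[0,\infty)$.

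One minor inaccuracy in your outline: you attribute the condition $r>\alpha/(2(1-\alpha))$ to the near-diagonal part of your region (ii). In fact, on $\{|\omega-\xi|\lesssim(1+|\xi|)^\alpha\}$ the factor $\beta(\omega)\sim(1+|\xi|)^{-\alpha}$ exactly cancels the length of that set, so its contribution is $O(1)$ for any $r>0$; your whole upper-bound scheme actually goes through with merely $r>1/2$. In the paper the condition $r>\alpha/(2(1-\alpha))$ enters only in the estimate on $I_2$, and there only because $\beta(\omega)$ is bounded crudely by $1$; using $\beta(\omega)\sim|\xi|^{-\alpha}$ on $I_2$ would make even that condition unnecessary for showing $\int_{I_2}\to 0$. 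So the stated hypotheses are, in either approach, comfortably sufficient rather than sharp.
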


\textbf{Remark:} Note that, for $\alpha \to 1$, the exponent $r$ becomes larger and larger: $r \to \infty$. That means that the closer $\alpha$ is to $1$, the stronger decay of the Fourier transform we need to achieve admissibility. 

\begin{proof}
We have to show that there exist positive constants $A, B > 0$ such that
\[ \label{eq_star}
0 < A \leq m(\xi) \leq B < \infty \tag{$\ast$}
\]
for almost all $\xi \in \R$, where
\[
m(\xi) = \int_{\R} |\hat{\psi}(\beta(\omega)(\xi - \omega))|^2 \beta(\omega)\,d\omega.
\]
For simplicity of notation, set
\[
r_{\xi}(\omega) = \beta(\omega)(\xi - \omega) = \frac{\xi - \omega}{(1 + |\omega |)^{\alpha}},
\]
that means
\[
m(\xi) = \int_{\R} | \hat{\psi}(r_{\xi}(\omega)) |^2 \beta(\omega)\,d\omega.
\]
First consider $\alpha = 0$. In this case, $\beta(\omega) \equiv 1$, thus
\[
m(\xi) = \int_{\R} |\hat{\psi}(\xi - \omega)|^2\,d\omega = \| \hat{\psi} \|^2 = \| \psi \|^2
\]
independent of $\xi$, so \eqref{eq_star} is satisfied with $A = B = \| \psi \|^2$. We do not even need the assumptions of Theorem \ref{T:2.1}  here; in fact \emph{every} $\psi \in L^2(\R)$ is admissible in the case $\alpha = 0$.

Now assume $0 < \alpha < 1$. We will first prove three lemmata that yield some simplifications.

In the first lemma (and only there), we will temporarily write $m_{\psi}(\xi)$ to denote the dependence of $m(\xi)$ on the function $\psi$.

\begin{lemma}\label{L:2.2}
We have
\[
m_{\psi}(-\xi) = m_{\overline{\psi}}(\xi),
\]
(where $\overline{\psi}(x) = \overline{\psi(x)}$ denotes the complex conjugate function to $\psi$).
\end{lemma}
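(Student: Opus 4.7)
My plan is to attack this by a direct change of variables in the defining integral for $m_\psi(-\xi)$, combined with the elementary Fourier identity relating $\hat{\overline{\psi}}$ to $\hat\psi$.

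First I would write out the definition
\[
m_\psi(-\xi) = \int_\R |\hat\psi(\beta(\omega)(-\xi - \omega))|^2 \beta(\omega)\, d\omega
\]
and then perform the substitution $\omega \mapsto -\omega$. This substitution is harmless since integration is over all of $\R$; because $\beta$ is symmetric, $\beta(-\omega) = \beta(\omega)$, so the measure $\beta(\omega)\, d\omega$ is unchanged, while the argument of $\hat\psi$ becomes $\beta(\omega)(-\xi + \omega) = -\beta(\omega)(\xi - \omega)$. So I get
\[
m_\psi(-\xi) = \int_\R |\hat\psi(-\beta(\omega)(\xi - \omega))|^2 \beta(\omega)\, d\omega.
\]

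Next I would recall the standard identity $\widehat{\overline{\psi}}(\eta) = \overline{\hat\psi(-\eta)}$, which implies $|\widehat{\overline{\psi}}(\eta)| = |\hat\psi(-\eta)|$ for every $\eta \in \R$. Applying this with $\eta = \beta(\omega)(\xi - \omega)$ converts the integrand to $|\widehat{\overline{\psi}}(\beta(\omega)(\xi - \omega))|^2$, which is exactly the integrand appearing in $m_{\overline\psi}(\xi)$. This yields $m_\psi(-\xi) = m_{\overline\psi}(\xi)$ as claimed.

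There is no real obstacle here: the proof is a one-line substitution plus the conjugation rule for the Fourier transform, and no convergence or measurability issues arise because the integrand is nonnegative (so Tonelli/substitution apply without fuss, regardless of whether $m_\psi$ is finite).
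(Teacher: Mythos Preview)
Your proof is correct and is essentially identical to the paper's: the paper also performs the substitution $\omega \mapsto -\omega$ (phrased as the identity $r_{-\xi}(\omega) = -r_\xi(-\omega)$ together with $\beta(-\omega)=\beta(\omega)$) and then applies the same Fourier conjugation rule $\widehat{\overline{\psi}}(\eta) = \overline{\hat\psi(-\eta)}$ to finish.
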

\begin{proof}
We have
\[
m_{\psi}(-\xi) = \int_{\R} | \hat{\psi}(r_{-\xi}(\omega)) |^2 \beta(\omega)\,d\omega.
\]
Now,
\[
r_{-\xi}(\omega) = \frac{-\xi - \omega}{(1 + |\omega |)^{\alpha}} = - \frac{\xi - (-\omega)}{(1 + | -\omega |)^{\alpha}} = - r_{\xi}(-\omega),
\]
so
\[
m_{\psi}(-\xi) = \int_{\R} | \hat{\psi}(-r_{\xi}(-\omega)) |^2 \beta(-\omega)\,d\omega = \int_{\R} | \hat{\psi}(-r_{\xi}(\omega)) |^2 \beta(\omega)\,d\omega.
\]
With $\overline{\hat{\psi}(-\eta)} = \hat{\overline{\psi}}(\eta)$, we conclude
\[
m_{\psi}(-\xi) = \int_{\R} | \hat{\overline{\psi}}(r_{\xi}(\omega)) |^2 \beta(\omega)\,d\omega = m_{\overline{\psi}}(\xi).
\]
\end{proof}
It is clear that if $\psi \in L^2(\R)$ satisfies the assumptions of Theorem \ref{T:2.1}, then $\overline{\psi}$ does so, as well. It suffices thus to show \eqref{eq_star} only for $\xi \geq 0$.

\begin{lemma}\label{L:2.3}
The function $m$ is strictly positive, i.e. $m(\xi) > 0$ for all $\xi \in \R$.
\end{lemma}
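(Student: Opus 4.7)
The plan is to show strict positivity by exhibiting a set of positive Lebesgue measure on which the non-negative integrand is bounded away from zero. Since $\psi\ne 0$, the Fourier transform $\hat\psi$ is not identically zero; by the continuity assumption, there exist $\eta_0 \in \R$, $\delta > 0$, and an open neighborhood $U$ of $\eta_0$ such that $|\hat\psi(\eta)| \geq \delta$ for all $\eta \in U$.

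The key step is to check that, for every fixed $\xi \in \R$, the map
\[
r_\xi : \R \to \R, \quad r_\xi(\omega) = \frac{\xi-\omega}{(1+|\omega|)^\alpha}
\]
is surjective. Continuity is immediate. For $\omega \to +\infty$ we have $r_\xi(\omega) \sim -\omega^{1-\alpha} \to -\infty$, and for $\omega \to -\infty$ we have $r_\xi(\omega) \sim |\omega|^{1-\alpha} \to +\infty$, where both limits use $\alpha < 1$ in an essential way. The intermediate value theorem then gives $r_\xi(\R) = \R$.

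Consequently, $r_\xi^{-1}(U)$ is nonempty (by surjectivity) and open (by continuity of $r_\xi$), hence has strictly positive Lebesgue measure. On this set we have the pointwise bound
\[
|\hat\psi(r_\xi(\omega))|^2\,\beta(\omega) \;\geq\; \delta^2\,\beta(\omega) \;>\; 0,
\]
since $\beta > 0$ everywhere. Integrating over $r_\xi^{-1}(U)$ alone already yields a strictly positive contribution to $m(\xi)$, and the remainder of the integrand is non-negative, so $m(\xi) > 0$.

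There is no real obstacle here; the only subtlety worth flagging is the use of $\alpha < 1$ in the asymptotic argument for surjectivity of $r_\xi$ (for $\alpha = 1$ the map would be bounded, and the argument would need to be adjusted). Note that this lemma only gives pointwise positivity of $m$; establishing the uniform lower bound $A > 0$ required by $(\ast)$ is a separate and harder matter, to be addressed in the remainder of the proof of Theorem~\ref{T:2.1}.
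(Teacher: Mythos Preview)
Your proof is correct and is essentially the contrapositive of the paper's argument: the paper assumes $m(\xi)=0$, uses continuity of the integrand to force $\hat\psi\circ r_\xi\equiv 0$, and then invokes exactly your surjectivity argument for $r_\xi$ (same asymptotics, same use of $\alpha<1$) to conclude $\hat\psi\equiv 0$, a contradiction. The ingredients---continuity of $\hat\psi$, surjectivity of $r_\xi$, and $\beta>0$---are identical; you simply run the implication forward instead of backward.
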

\begin{proof}
It is clear that $m(\xi) \geq 0$ for all $\xi$.
Suppose there is a $\xi \in \R$ such that $m(\xi) = 0$. Since the map $\omega \mapsto |\hat{\psi}(r_{\xi}(\omega))|^2 \beta(\omega)$ is continuous, $\int_{\R} |\hat{\psi}(r_{\xi}(\omega))|^2 \beta(\omega)\, d\omega = 0$ implies $|\hat{\psi}(r_{\xi}(\omega))|^2 \beta(\omega) = 0$ for all $\omega \in \R$, which gives
$|\hat{\psi}(r_{\xi}(\omega))| = 0$ for all $\omega \in \R$. Now observe that, for fixed $\xi$,
\[
r_{\xi}(\omega) = \frac{\xi - \omega}{(1 + |\omega |)^{\alpha}} \longrightarrow \begin{cases} -\infty & \quad \mbox{ for }\omega \to + \infty, \\
+ \infty & \quad \mbox{ for } \omega \to - \infty;
\end{cases}
\]
thus the range of $r_{\xi}$ is all of $\R$ and so $\hat{\psi}(\eta) = 0$ for all $\eta \in \R$. But this is equivalent to $\psi = 0$, a contradiction.
\end{proof}

\begin{lemma}\label{L:2.4}
The function $m(\xi)$ is continuous in $\xi$.
\end{lemma}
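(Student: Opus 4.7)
The plan is to establish continuity via the dominated convergence theorem. Fix $\xi_0\in\R$ and an arbitrary sequence $\xi_n\to\xi_0$. Since $\hat{\psi}$ is continuous and for each fixed $\omega$ the map $\xi\mapsto r_{\xi}(\omega)=(\xi-\omega)/(1+|\omega|)^{\alpha}$ is continuous, the integrand
\[
F_n(\omega):=|\hat{\psi}(r_{\xi_n}(\omega))|^2\beta(\omega)
\]
converges pointwise to $F_0(\omega)=|\hat{\psi}(r_{\xi_0}(\omega))|^2\beta(\omega)$. It remains to produce a single $L^1(\R)$-majorant that dominates all $F_n$.

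To this end, restrict to $n$ large enough so that $|\xi_n|\leq|\xi_0|+1=:M$. I split the integration into a compact part and a tail. For $|\omega|\leq 2M$, the function $\hat{\psi}$ is continuous and bounded on the compact set where $r_{\xi_n}(\omega)$ ranges (uniformly in $n$), and $\beta\leq 1$, so $F_n$ is bounded by a constant there, which is integrable over $[-2M,2M]$. For $|\omega|\geq 2M$ one has $|\xi_n-\omega|\geq|\omega|-M\geq|\omega|/2$, and therefore
\[
|r_{\xi_n}(\omega)|\geq\frac{|\omega|/2}{(1+|\omega|)^{\alpha}}\geq c_0|\omega|^{1-\alpha}
\]
for some $c_0>0$ independent of $n$. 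Using the decay hypothesis $|\hat{\psi}(\eta)|\leq C(1+|\eta|)^{-r}$ combined with $\beta(\omega)=(1+|\omega|)^{-\alpha}$, this yields
\[
F_n(\omega)\leq C^2\bigl(1+c_0|\omega|^{1-\alpha}\bigr)^{-2r}(1+|\omega|)^{-\alpha}\leq C'|\omega|^{-2r(1-\alpha)-\alpha}
\]
for $|\omega|\geq 2M$. Since $r>1>1/2$, we have $2r(1-\alpha)+\alpha>1$, so this majorant is integrable at infinity.

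Gluing the two bounds produces a function $\Phi\in L^1(\R)$ with $F_n\leq\Phi$ for all large $n$. Lebesgue's dominated convergence theorem then gives $m(\xi_n)=\int_{\R}F_n\,d\omega\to\int_{\R}F_0\,d\omega=m(\xi_0)$, proving continuity of $m$ at $\xi_0$. The only mildly delicate step is securing the uniform lower bound on $|r_{\xi_n}(\omega)|$ outside a bounded set of $\omega$, which is immediate once the $\xi_n$ are confined to a bounded neighborhood of $\xi_0$; note that this part of the argument uses only the very weak bound $r>1/2$ and not the stronger hypothesis of Theorem~\ref{T:2.1}, which will be needed elsewhere to obtain the two-sided bounds ($\ast$).
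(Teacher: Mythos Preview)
Your proof is correct and follows essentially the same strategy as the paper: both arguments establish continuity via dominated convergence, using the decay hypothesis on $\hat{\psi}$ to build an integrable majorant that behaves like $|\omega|^{-\alpha-2r(1-\alpha)}$ at infinity. The only cosmetic difference is that the paper avoids your compact/tail split by exploiting $|r_{\xi}(\omega)-r_{\xi'}(\omega)|=\beta(\omega)|\xi-\xi'|\leq\tfrac12$ to obtain a single global majorant $\tilde C\,\beta(\omega)(\tfrac12+|r_{\xi}(\omega)|)^{-2r}$; your closing remark that this step already works under the weaker condition $r>\tfrac12$ is a valid sharpening of what the paper records (they invoke $r>1$).
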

\begin{proof}
Let $\xi \in \R$ be fixed. Let $\xi^{\prime} \in \R$ with $|\xi^{\prime} - \xi | \leq \frac{1}{2}$. Then
\[
| m(\xi) - m(\xi^{\prime}) | \leq \int_{\R} \underbrace{\left| |\hat{\psi}(r_{\xi}(\omega))|^2 - | \hat{\psi}(r_{\xi^{\prime}}(\omega)) |^2 \right|\,\beta(\omega)}_{=: I(\omega)}\,d\omega.
\]
We want to use the Dominated Convergence Theorem on this last integral. To this end, consider the integrand $I(\omega)$. If $\psi$ satisfies $|\hat{\psi}(\xi)| \leq C (1+|\xi|)^{-r}$, we can estimate
\begin{align*}
I(\omega) & \leq \beta(\omega) \left( C^{2}(1 + | r_{\xi}(\omega) |)^{-2r} + C^{2}(1 + | r_{\xi^{\prime}}(\omega) |)^{-2r} \right) \\
& = C^{2} \beta(\omega) \left( (1 + | r_{\xi}(\omega) |)^{-2r} + (1 + | r_{\xi^{\prime}}(\omega) |)^{-2r} \right).
\end{align*}
Observe that if $|\xi^{\prime} - \xi | \leq \frac{1}{2}$ then
\begin{align*}
| r_{\xi}(\omega) - r_{\xi^{\prime}}(\omega) | & = |\beta(\omega)(\xi - \omega) - \beta(\omega)(\xi^{\prime} - \omega) | \\
& = \beta(\omega) | \xi - \xi^{\prime} | \leq \frac{1}{2}
\end{align*}
for all $\omega \in \R$. Hence
\[
1 + | r_{\xi^{\prime}}(\omega) | \geq \frac{1}{2} + | r_{\xi}(\omega) |
\]
for all $\omega\in\R$; since, trivially, also
\[ 1 + | r_{\xi}(\omega) | \geq \frac{1}{2} + | r_{\xi}(\omega) |,
\]
we get
\begin{align*}
I(\omega) & \leq C^{2} \beta(\omega) \cdot 2(\frac{1}{2} + | r_{\xi}(\omega) |)^{-2r} \\
& = \tilde{C} \beta(\omega) ( \frac{1}{2} + \beta(\omega) | \xi - \omega | )^{-2r},
\end{align*}
which is independent of $\xi^{\prime}$. But this last expression is integrable, since, for large $|\omega|$, it behaves asymptotically like
\[
\sim |\omega |^{-\alpha} (| \omega |^{-\alpha} |\omega |)^{-2r} = \frac{1}{| \omega |^{\alpha + 2r(1-\alpha)}},
\]
and with $r > 1$ the exponent satisfies
\[
\alpha + 2r(1-\alpha) > \alpha + 2(1 - \alpha) = 2 - \alpha > 1.
\]
It is further clear that for $\xi^{\prime} \to \xi$, we have $r_{\xi^{\prime}}(\omega) \to r_{\xi}(\omega)$ and, since $\hat{\psi}$ is continuous, $\hat{\psi}(r_{\xi^{\prime}}(\omega)) \to \hat{\psi}(r_{\xi}(\omega))$, pointwise for all $\omega \in \R$. Thus the integrand satisfies
$I(\omega) \to 0$ for $\xi^{\prime} \to \xi$, pointwise for all $\omega \in \R$, so Dominated Convergence finally yields
\[
m(\xi^{\prime}) - m(\xi) \to 0
\]
for $\xi^{\prime} \to \xi$, i.e. $m$ is continuous.
\end{proof}

The last two lemmata show that it suffices to prove \eqref{eq_star} only for sufficiently large $\xi$, say $\xi \geq K$; in this case, on the compact interval $[0, K]$, $m$ satisfies \eqref{eq_star} as well, since it is continuous and strictly positive there, so \eqref{eq_star} will hold for all $\xi \geq 0$. We will in fact be able to show an even stronger statement: we will prove that $\lim_{\xi \to \infty} m(\xi) = L > 0$ exists and is positive. From this, the above follows.

Without loss of generality, assume from now on that $\xi > \frac{2}{\alpha}$. Then, obviously, $\alpha \xi > 2$ and $\xi > 2$.
%Then, in particular, $\xi > \frac{1}{\alpha} + (\frac{1}{\alpha} - 1) > \frac{1}{\alpha} > 1$ and $\alpha \xi > 1$. Furthermore, since $\lim_{\xi \to \infty} \xi - \xi^{\alpha} = \infty$, we may without loss of generality assume from now on that $\xi$ is so large that $\xi - \xi^{\alpha} > \frac{1}{\alpha}$.

Before we proceed any further, we will discuss the function $r_{\xi}(\omega)$ in more detail.

\begin{lemma}\label{L:2.5}
The derivative of $r_{\xi}(\omega)$ for $\omega \not= 0$ is given by
\[
r^{\prime}_{\xi}(\omega) =  \frac{d}{d\omega}r_{\xi}(\omega) = -\beta(\omega) \left( 1 + \mbox{sgn}(\omega)\cdot \alpha \frac{\xi - \omega}{1 + |\omega |} \right).
\]
\end{lemma}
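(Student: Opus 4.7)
This is a direct calculus computation, so the plan is simply to differentiate carefully, taking into account that the absolute value function $\omega \mapsto |\omega|$ is differentiable on $\R \setminus \{0\}$ with derivative $\mathrm{sgn}(\omega)$ (which explains precisely why the statement excludes $\omega = 0$, where $r_\xi$ has a corner).

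First I would rewrite $r_\xi(\omega) = (\xi - \omega)(1 + |\omega|)^{-\alpha}$ and apply the product rule. The derivative of the first factor is $-1$, and the derivative of the second factor, via the chain rule, is $-\alpha(1+|\omega|)^{-\alpha-1}\cdot\mathrm{sgn}(\omega)$, valid for $\omega \neq 0$. Combining,
\[
r'_\xi(\omega) = -(1+|\omega|)^{-\alpha} - \alpha\,\mathrm{sgn}(\omega)\,(\xi - \omega)\,(1+|\omega|)^{-\alpha-1}.
\]

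Next I would factor out $\beta(\omega) = (1+|\omega|)^{-\alpha}$ from both terms, writing the remaining factor $(1+|\omega|)^{-\alpha-1}$ in the second term as $\beta(\omega)/(1+|\omega|)$. This yields
\[
r'_\xi(\omega) = -\beta(\omega)\left( 1 + \mathrm{sgn}(\omega)\cdot \alpha\,\frac{\xi - \omega}{1 + |\omega|}\right),
\]
which is exactly the claimed formula.

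There is no real obstacle here; the only thing to keep an eye on is the $\mathrm{sgn}(\omega)$ factor inherited from differentiating $|\omega|$, which must be preserved through the factoring step. It is also worth noting that the formula is only asserted for $\omega \neq 0$, consistent with the non-differentiability of $|\omega|$ at the origin; this will not affect later applications since integration against a continuous function is insensitive to a single point.
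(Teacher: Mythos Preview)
Your proof is correct and essentially the same direct computation as in the paper; the only cosmetic difference is that the paper splits explicitly into the cases $\omega>0$ and $\omega<0$ and differentiates each branch separately, whereas you handle both at once via $\frac{d}{d\omega}|\omega|=\mathrm{sgn}(\omega)$.
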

\begin{proof}
For $\omega > 0$, we have $r_{\xi}(\omega) = \frac{\xi - \omega}{(1 + \omega)^{\alpha}}$, which differentiates to
\begin{align*}
r^{\prime}_{\xi}(\omega) & = \frac{-(1 + \omega)^{\alpha} - (\xi - \omega) \alpha (1 + \omega)^{\alpha - 1}}{(1 + \omega)^{2\alpha}} \\
& = - \frac{1}{(1 + \omega)^{\alpha}} \left( 1 + \alpha \frac{\xi - \omega}{1 + \omega } \right).
\end{align*}
For $\omega < 0$, we have $r_{\xi}(\omega) = \frac{\xi - \omega}{(1 - \omega)^{\alpha}}$, which differentiates to
\begin{align*}
r^{\prime}_{\xi}(\omega) & = \frac{-(1 - \omega)^{\alpha} + (\xi - \omega) \alpha (1 - \omega)^{\alpha - 1}}{(1 - \omega)^{2\alpha}} \\
& = - \frac{1}{(1 - \omega)^{\alpha}} \left( 1 - \alpha \frac{\xi - \omega}{1 - \omega } \right).
\end{align*}
In summary,
\[
r^{\prime}_{\xi}(\omega) = -\frac{1}{(1 + |\omega |)^{\alpha}}\left( 1 + \mbox{sgn}(\omega)\cdot \alpha \frac{\xi - \omega}{1 + |\omega |} \right),
\]
as claimed.
\end{proof}

\begin{lemma}\label{L:2.6}
Set $\omega^{\ast}_{\xi} := \frac{1 - \alpha \xi}{1 - \alpha} < 0$. Then $r_{\xi}(\omega)$ is strictly decreasing on $]-\infty, \omega^{\ast}_{\xi}[$, strictly increasing on $]\omega^{\ast}_{\xi}, 0[$, and strictly decreasing on $]0, +\infty[$. At the local minimum $\omega^{\ast}_{\xi}$, we have $r_{\xi}(\omega^{\ast}_{\xi}) = \frac{1}{\alpha^{\alpha}} \left( \frac{\xi - 1}{1 - \alpha} \right)^{1 - \alpha} > 0$. At the local maximum 0, we have $r_{\xi}(0) = \xi > 0$.
\end{lemma}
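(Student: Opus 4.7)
I would prove Lemma 2.6 by directly exploiting the explicit formula for $r'_\xi(\omega)$ from Lemma 2.5, treating the cases $\omega > 0$ and $\omega < 0$ separately (both have one-sided limits at $0$, and $r_\xi$ is continuous but not differentiable at $\omega = 0$). Since $r_\xi$ is clearly continuous on all of $\R$, establishing strict monotonicity on the three open intervals will give the claim.

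First, for $\omega > 0$, the sign of $r'_\xi(\omega)$ equals the opposite sign of the factor $1 + \alpha\frac{\xi-\omega}{1+\omega}$. Clearing the denominator, this factor has the same sign as $(1+\omega) + \alpha(\xi - \omega) = 1 + (1-\alpha)\omega + \alpha\xi$, which is manifestly positive for $\omega > 0$, $\alpha \in (0,1)$, $\xi > 0$. Hence $r'_\xi(\omega) < 0$ throughout $(0, \infty)$, so $r_\xi$ is strictly decreasing there.

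Next, for $\omega < 0$, the sign of $r'_\xi(\omega)$ is the opposite of the sign of $1 - \alpha\frac{\xi-\omega}{1-\omega}$. Substituting $u := -\omega > 0$, this factor is $1 - \alpha\frac{\xi + u}{1+u}$, which vanishes exactly when $1+u = \alpha(\xi+u)$, i.e. when $u = \frac{\alpha\xi - 1}{1-\alpha}$; translating back yields the unique zero at $\omega = \omega^*_\xi = \frac{1-\alpha\xi}{1-\alpha}$, which is negative since $\alpha\xi > 2 > 1$. Writing $\alpha\frac{\xi+u}{1+u} = \alpha + \frac{\alpha(\xi-1)}{1+u}$ and noting that $\xi > 1$, this expression is strictly decreasing in $u$, going from $\alpha\xi > 1$ at $u=0$ to $\alpha < 1$ as $u \to \infty$. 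Therefore $1 - \alpha\frac{\xi+u}{1+u}$ is negative for $u \in (0, -\omega^*_\xi)$ (i.e. $\omega \in (\omega^*_\xi, 0)$) and positive for $u > -\omega^*_\xi$ (i.e. $\omega < \omega^*_\xi$). Consequently $r'_\xi > 0$ on $(\omega^*_\xi, 0)$ and $r'_\xi < 0$ on $(-\infty, \omega^*_\xi)$, giving the two remaining monotonicity statements and identifying $\omega^*_\xi$ as a local minimum and $0$ as a local maximum.

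Finally, I would compute the claimed values by direct substitution. Using $1 - \omega^*_\xi = 1 + |\omega^*_\xi| = \frac{\alpha(\xi-1)}{1-\alpha}$ and $\xi - \omega^*_\xi = \frac{\xi - 1}{1-\alpha}$, one obtains
\[
r_\xi(\omega^*_\xi) \;=\; \frac{(\xi-1)/(1-\alpha)}{\bigl(\alpha(\xi-1)/(1-\alpha)\bigr)^{\alpha}} \;=\; \frac{1}{\alpha^{\alpha}}\left(\frac{\xi-1}{1-\alpha}\right)^{1-\alpha},
\]
which is positive since $\xi > 1$; and $r_\xi(0) = \xi > 0$ is immediate. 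There is no real obstacle here, just careful sign-tracking in the $\omega < 0$ case and some algebraic simplification at $\omega^*_\xi$; the hypothesis $\xi > 2/\alpha$ is used precisely to ensure $\omega^*_\xi < 0$ (via $\alpha\xi > 1$) and the decrease of $u \mapsto \alpha(\xi+u)/(1+u)$ (via $\xi > 1$).
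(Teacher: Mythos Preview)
Your proof is correct and follows essentially the same approach as the paper: you use the derivative formula from Lemma~\ref{L:2.5}, clear denominators to handle $\omega>0$, locate the unique zero of $r'_\xi$ on $(-\infty,0)$ at $\omega^\ast_\xi$, and determine the sign on either side. The only cosmetic difference is that for $\omega<0$ you substitute $u=-\omega$ and argue via the explicit monotonicity of $u\mapsto \alpha(\xi+u)/(1+u)$, whereas the paper checks the limits of $r'_\xi$ as $\omega\to-\infty$ and $\omega\to 0^-$; both arguments are equivalent, and your direct computation of $r_\xi(\omega^\ast_\xi)$ fills in what the paper leaves as ``a simple computation.''
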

\begin{proof}
Let $\omega > 0$. Then 
\begin{align*}
r^{\prime}_{\xi}(\omega) & = -\beta(\omega) \left( 1 + \alpha \frac{\xi - \omega}{1 + \omega} \right) \\
& = - \frac{\beta(\omega)}{1 + \omega}\left( 1 + \omega(1 - \alpha) + \alpha \xi \right) < 0,
\end{align*}
so $r_{\xi}(\omega)$ is strictly decreasing for $\omega > 0$.\\
For $\omega < 0$, note that $r^{\prime}_{\xi}(\omega) = -\beta(\omega) \left(1 - \alpha \frac{\xi - \omega}{1 - \omega}  \right) = 0$ if and only if $\left(1 - \alpha \frac{\xi - \omega}{1 - \omega}  \right) = 0$ if and only if $1 - \omega(1-\alpha) - \alpha \xi = 0$ if and only if $\omega = \frac{1 - \alpha \xi}{1 - \alpha} = \omega^{\ast}_{\xi}$. Since $\alpha \xi > 1$, $\omega^{\ast}_{\xi} < 0$. Since $r^{\prime}_{\xi}(\omega)$ is continuous on $]-\infty, 0[$, this is the only place where it can change its sign. If $\omega \to - \infty$, then $r^{\prime}_{\xi}(\omega) \uparrow 0$ and $r^{\prime}_{\xi}(\omega) < 0$, so $r^{\prime}_{\xi}(\omega) < 0$ for $\omega < \omega^{\ast}_{\xi}$, and $r_{\xi}(\omega)$ is decreasing. If $\omega \to 0$, then $r^{\prime}_{\xi}(\omega) \to \alpha \xi - 1 > 0$, so $r^{\prime}_{\xi}(\omega) > 0$ for $0 > \omega > \omega^{\ast}_{\xi}$, and $r_{\xi}(\omega)$ is increasing. The values at the local minimum $\omega_{\xi}^{\ast}$ and the local maximum $0$ follow by a simple computation.
\end{proof}

Our final lemma will help to compute and estimate several integrals in the following.

\begin{lemma}\label{L:2.7}
Let $I \subseteq \R$ be an interval such that $r_{\xi}$ is monotonous on $I$. Then
\[
\int_I | \hat{\psi}(r_{\xi}(\omega)) |^2 \beta(\omega)\,d\omega = \int_{r_{\xi}(I)} | \hat{\psi}(z) |^2 \frac{1}{| h_{\xi}(r_{\xi}^{-1}(z) |} \,dz,
\]
with
\[
h_{\xi}(\omega) = 1 + \mbox{sgn}(\omega)\cdot \alpha \frac{\xi - \omega}{1 + |\omega |}, \quad \quad \omega \in I.
\]
\end{lemma}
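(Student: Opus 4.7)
The plan is a direct change of variables $z = r_\xi(\omega)$. Lemma~\ref{L:2.5} already computes
\[
r^{\prime}_{\xi}(\omega) = -\beta(\omega)\, h_\xi(\omega), \qquad \omega \neq 0,
\]
so the Jacobian of this substitution is $\beta(\omega)\,|h_\xi(\omega)|$ once absolute values are taken. The weight $\beta(\omega)$ appearing in the integrand on the left-hand side therefore cancels the $\beta(\omega)$ in the Jacobian, leaving precisely the factor $1/|h_\xi(r_\xi^{-1}(z))|$ that appears on the right.

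More concretely, I would argue as follows. By Lemma~\ref{L:2.6}, the zeros of $r^{\prime}_\xi$ are exactly the critical points $\omega^{\ast}_\xi$ and $0$, so on any open interval on which $r_\xi$ is strictly monotonous the derivative $r^{\prime}_\xi$ has constant nonzero sign, and $r_\xi$ is a $C^1$ diffeomorphism from the interior of $I$ onto the interior of $r_\xi(I)$. By hypothesis $I$ is such an interval (possibly modified on a set of measure zero at the endpoints, which does not affect the integrals). Applying the standard change-of-variables theorem to the nonnegative integrand $|\hat\psi(r_\xi(\omega))|^2 \beta(\omega)$, with $|dz| = |r^{\prime}_\xi(\omega)|\,|d\omega| = \beta(\omega)\,|h_\xi(\omega)|\,|d\omega|$, yields exactly the identity claimed.

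The only point requiring care is orientation: if $r_\xi$ is decreasing on $I$, then $r_\xi(I)$ is traversed in reverse order when one naively writes $dz = r^{\prime}_\xi(\omega)\,d\omega$. This is automatically compensated by passing to the absolute-value form of the change-of-variables formula (equivalently, the area formula for a diffeomorphism), which is the form I would cite here, and which is legitimate precisely because the integrand is nonnegative. Thus the proof reduces to a one-line substitution argument, and the lemma is really just fixing convenient notation for the expression that will be applied repeatedly on the three monotonicity intervals identified in Lemma~\ref{L:2.6}.
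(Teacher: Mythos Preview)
Your proposal is correct and follows exactly the same approach as the paper: observe from Lemma~\ref{L:2.5} that $r_\xi'(\omega) = -\beta(\omega) h_\xi(\omega)$, note that monotonicity of $r_\xi$ on $I$ forces $0 \notin \mathrm{int}(I)$, and then perform the substitution $z = r_\xi(\omega)$. Your write-up is in fact more careful about orientation and the diffeomorphism property than the paper's own one-line proof.
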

\begin{proof}
We necessarily have $0 \not\in \mbox{int}(I)$, the interior of $I$, since, by Lemma \ref{L:2.6}, $r_{\xi}$ has a local maximum in $0$, so $r_{\xi}$ is not monotonous if $0 \in \mbox{int}(I)$.

Observe that, by Lemma \ref{L:2.5},
\[
r_{\xi}^{\prime}(\omega) = -\beta(\omega) h_{\xi}(\omega);
\]
the statement follows from this by the substitution $z = r_{\xi}(\omega)$, $dz = -\beta(\omega) h_{\xi}(\omega)\,d\omega$. 
\end{proof}

We are now ready to finish the proof of Theorem \ref{T:2.1}.

We split the integral defining $m(\xi)$ into four parts,
\[
m(\xi) = \int_{\R} |\hat{\psi}(r_{\xi}(\omega)) |^2 \beta(\omega)\, d\omega = \int_{I_1}\ldots + \int_{I_2}\ldots + \int_{I_3}\ldots + \int_{I_4}\ldots,
\]
and treat each part separately. The four intervals are
\begin{align*}
I_1 & = (-\infty, \omega_{\xi}^{\ast} - \frac{\alpha \xi^{\alpha}}{2(1 - \alpha)}], \\
I_2 & = [\omega_{\xi}^{\ast} - \frac{\alpha \xi^{\alpha}}{2(1 - \alpha)}, \omega_{\xi}^{\ast} + \frac{\alpha \xi^{\alpha}}{2(1 - \alpha)}],\\
I_3 & = [\omega_{\xi}^{\ast} + \frac{\alpha \xi^{\alpha}}{2(1 - \alpha)}, 0], \mbox{ and } \\
I_4 & = [0, \infty).
\end{align*}

Observe that
\begin{align*}
\omega_{\xi}^{\ast} + \frac{\alpha \xi^{\alpha}}{2(1 - \alpha)}  = \frac{1 - \alpha \xi}{1 - \alpha} + \frac{\frac{\alpha}{2} \xi^{\alpha}}{1 - \alpha}  < \frac{1 - \alpha \xi + \frac{\alpha}{2}\xi}{1 - \alpha}  = \frac{1 - \frac{\alpha}{2}\xi}{1 - \alpha} < 0,
\end{align*}
since $\xi > \frac{2}{\alpha}$. Thus $I_2 \subset (-\infty, 0]$, and $I_3$ is well defined.

Note that $r_{\xi}$ is monotonous on $I_1$, $I_3$ and $I_4$.

\begin{itemize}

\item \textbf{$I_2 = [\omega_{\xi}^{\ast} - \frac{\alpha \xi^{\alpha}}{2(1 - \alpha)}, \omega_{\xi}^{\ast} + \frac{\alpha \xi^{\alpha}}{2(1 - \alpha)}]$:} On $I_2$, $r_{\xi}$ has a local minimum at $\omega_{\xi}^{\ast}$. Thus
\begin{align*}
\int_{I_2} & |\hat{\psi}(r_{\xi}(\omega)) |^2 \beta(\omega)\, d\omega \leq \int_{I_2} C^2 (1 + | r_{\xi}(\omega) |)^{-2r} \, d\omega \\
& \leq \int_{I_2} C^2 (1 + | r_{\xi}(\omega_{\xi}^{\ast}) |)^{-2r}\, d\omega \\
& = \frac{\alpha}{1 - \alpha} C^2 \xi^{\alpha} (1 + | r_{\xi}(\omega_{\xi}^{\ast}) |)^{-2r}.
\end{align*}
By Lemma \ref{L:2.6}, we asymptotically have $| r_{\xi}(\omega_{\xi}^{\ast}) | \sim | \xi |^{1 - \alpha}$, so
\[
\int_{I_2} |\hat{\psi}(r_{\xi}(\omega)) |^2 \beta(\omega)\, d\omega \sim |\xi |^{\alpha} (| \xi |^{1 - \alpha})^{-2r} = |\xi |^{\alpha - 2r(1 - \alpha)};
\]
since $r > \frac{\alpha}{2(1-\alpha)}$, we have $\alpha - 2r(1 - \alpha) < 0$, thus
\[
\int_{I_2} |\hat{\psi}(r_{\xi}(\omega)) |^2 \beta(\omega)\, d\omega \to 0
\]
for $\xi \to \infty$.

\item \textbf{$I_1 = (-\infty, \omega_{\xi}^{\ast} - \frac{\alpha \xi^{\alpha}}{2(1 - \alpha)}]$:} We use Lemma \ref{L:2.7} to write
\[
\int_{I_1} | \hat{\psi}(r_{\xi}(\omega)) |^2 \beta(\omega) \, d\omega = \int_{r_{\xi}(I_1)} | \hat{\psi}(z) |^2 \frac{1}{| h_{\xi}(r_{\xi}^{-1}(z) |} \,dz,
\]
where in this case
\[
h_{\xi}(\omega) = 1 - \alpha \frac{\xi - \omega}{1 - \omega }, \quad \quad \omega \in I_1.
\]
Since $h_{\xi}(\omega) = - \frac{r_{\xi}^{\prime}(\omega)}{\beta(\omega)}$, we have $h_{\xi}(\omega) > 0$ on the interval \\ 
$(-\infty, \omega_{\xi}^{\ast}[$, by Lemma \ref{L:2.6}. Furthermore,
\[
h_{\xi}^{\prime}(\omega) = - \alpha \frac{\xi - 1}{(1 - \omega)^2} < 0
\]
on $(-\infty, \omega_{\xi}^{\ast}[$, so $h_{\xi}$ is a monotone decreasing function on $I_1$, and hence $h_{\xi}$ assumes its infimum at the rightmost point of $I_1$, i.e. at $\omega_{\xi}^{\ast} - \frac{\alpha \xi^{\alpha}}{2(1 - \alpha)}$. The infimum is given by
\[
\inf_{\omega \in I_1} | h_{\xi}(\omega)| = | h_{\xi}(\omega_{\xi}^{\ast} - \frac{\alpha \xi^{\alpha}}{2(1 - \alpha)}) | = \ldots = \frac{\frac{1-\alpha}{2}\xi^{\alpha}}{\xi - 1 + \frac{1}{2}\xi^{\alpha}},
\]
which behaves asymptotically like $| \xi|^{\alpha - 1}$. We conclude
\[
\frac{1}{| h_{\xi}(\omega) |} \leq \frac{1}{\inf_{\omega \in I_1} | h_{\xi}(\omega) |} \sim | \xi|^{1 - \alpha},
\]
for $\omega \in I_1$. For the transformed interval, we find
$r_{\xi}(I_1)  = [z_1(\xi), + \infty)$ with
\begin{align*}
z_1(\xi) & = r_{\xi}(\omega_{\xi}^{\ast} - \frac{\alpha \xi^{\alpha}}{2(1 - \alpha)}) \\
& = r_{\xi}(\frac{1 - \alpha \xi - \frac{\alpha}{2}\xi^{\alpha}}{1 - \alpha}) \\
& = \ldots = \frac{1}{(1-\alpha)^{1 - \alpha}\alpha^{\alpha}} \frac{\xi - 1 + \frac{\alpha}{2}\xi^{\alpha}}{(\xi - 1 + \frac{1}{2}\xi^{\alpha})^{\alpha}},
\end{align*}
which also behaves like $| \xi|^{1 - \alpha}$. Putting it all together, we find
\begin{align*}
\int_{r_{\xi}(I_1)} | \hat{\psi}(z) |^2 & \frac{1}{| h_{\xi}(r_{\xi}^{-1}(z) |} \,dz \\
& \leq \frac{1}{\inf_{\omega \in I_1} | h_{\xi}(\omega) |} \int_{r_{\xi}(I_1)} | \hat{\psi}(z) |^2\,dz \\
& \leq \frac{1}{\inf_{\omega \in I_1} | h_{\xi}(\omega)| } \int_{z_1(\xi)}^{\infty} C^2(1 + |z|)^{-2r}\,dz \\
& = \frac{C^2}{(2r - 1)\cdot \inf_{\omega \in I_1} | h_{\xi}(\omega)| } (1 + z_1(\xi))^{-2r + 1},
\end{align*}
that is asymptotically equivalent to $|\xi|^{1-\alpha} |\xi |^{(1-\alpha)(-2r + 1)} = \\ |\xi |^{2(1-\alpha)(1-r)}$. Since $2(1-\alpha)(1 - r) < 0$, we finally conclude
\[
\int_{I_1} | \hat{\psi}(r_{\xi}(\omega)) |^2 \beta(\omega) \, d\omega = \int_{r_{\xi}(I_1)} | \hat{\psi}(z) |^2  \frac{1}{| h_{\xi}(r_{\xi}^{-1}(z) |} \,dz \to 0
\]
for $\xi \to \infty$.

\item \textbf{$I_3 = [\omega_{\xi}^{\ast} + \frac{\alpha \xi^{\alpha}}{2(1 - \alpha)}, 0]$:} This is very similar to the previous case $I_1$. We have
\[
\int_{I_3} | \hat{\psi}(r_{\xi}(\omega)) |^2 \beta(\omega) \, d\omega = \int_{z_2(\xi)}^{\xi} | \hat{\psi}(z) |^2 \frac{1}{| h_{\xi}(r_{\xi}^{-1}(z) |} \,dz
\]
with $h_{\xi}$ as above, and
\[
z_2(\xi) = \frac{1}{(1-\alpha)^{1 - \alpha}\alpha^{\alpha}} \frac{\xi - 1 - \frac{\alpha}{2}\xi^{\alpha}}{(\xi - 1 - \frac{1}{2}\xi^{\alpha})^{\alpha}}.
\]
On $I_3$, $h_{\xi}(\omega) < 0$, $h^{\prime}_{\xi}(\omega) < 0$, so
\[
\inf_{\omega \in I_3} | h_{\xi}(\omega)| = | h_{\xi}(\omega_{\xi}^{\ast} + \frac{\alpha \xi^{\alpha}}{2(1 - \alpha)}) | = \ldots = \frac{\frac{1-\alpha}{2}\xi^{\alpha}}{\xi - 1 - \frac{1}{2}\xi^{\alpha}} \sim |\xi |^{\alpha - 1}.
\]
This yields
\begin{align*}
\int_{I_3} | \hat{\psi}(r_{\xi}(\omega)) & |^2 \beta(\omega) \, d\omega \\
& \leq \frac{1}{\inf_{\omega \in I_3} | h_{\xi}(\omega)| } \int_{z_2(\xi)}^{\xi} C^2(1 + |z|)^{-2r}\,dz \\
& \sim |\xi |^{2(1-\alpha)(1-r)},
\end{align*}
and this goes to $0$ for $\xi \to \infty$ because $2(1-\alpha)(1 - r) < 0$.

\item \textbf{$I_4 = [0, \infty)$:} We consider
\[
\int_{I_4} | \hat{\psi}(r_{\xi}(\omega)) |^2 \beta(\omega)\,d\omega = \int_{r_{\xi}(I_4)} | \hat{\psi}(z) |^2 \frac{1}{| h_{\xi}(r_{\xi}^{-1}(z) |} \,dz,
\]
now with
\[
h_{\xi}(\omega) = 1 + \alpha \frac{\xi - \omega}{1 + \omega }, \quad \quad \omega \in I_4.
\]
Since $r_{\xi}(0) = \xi$ and $\lim_{\omega \to \infty} r_{\xi}(\omega) = -\infty$, we have $r_{\xi}(I_4) = (-\infty, \xi]$. Let $\varepsilon > 0$ be given. Choose $A > 0$ such that
\[
\int_{\R \setminus [-A, A]} | \hat{\psi}(z) |^2\,dz \leq \varepsilon,
\]
that means
\[
\left| \int_{[-A, A]} | \hat{\psi}(z) |^2\,dz - \| \psi \|^2 \right| \leq \varepsilon.
\]
Now assume $\xi > A$. Then
\[
\int_{-\infty}^{\xi} | \hat{\psi}(z) |^2 \frac{1}{| h_{\xi}(r_{\xi}^{-1}(z) |} \,dz  = \int_{[-A, A]}\ldots + \int_{(-\infty, \xi] \setminus [-A, A]}\ldots.
\]
The second integral can be estimated as follows: first observe that
\[
h_{\xi}(\omega) = \frac{1 + \alpha \xi + \omega(1 - \alpha)}{1 + \omega} > 0
\]
on $I_4$. Its derivative is
\[
h_{\xi}^{\prime}(\omega) = - \alpha \frac{\xi + 1}{(1 + \omega)^2} < 0,
\]
so $\inf_{\omega \in I_4} |h_{\xi}(\omega) | = \lim_{\omega \to \infty} | h_{\xi}(\omega) | = 1 - \alpha$. Hence
\[
\int_{(-\infty, \xi] \setminus [-A, A]}\ldots \leq \frac{1}{1 - \alpha} \int_{(-\infty, \xi] \setminus [-A, A]} | \hat{\psi}(z) |^2\,dz \leq \frac{\varepsilon}{1 - \alpha}.
\]
For the first integral, we use the following statement that is taken from \cite{DahForRau+2008} (see Lemma 5.1 and the proof of Theorem 5.2 there):

\begin{lemma}\label{L:2.8}
For every fixed $A > 0$,
\[
\lim_{\xi \to \infty} h_{\xi}(r_{\xi}^{-1}(z)) = 1
\]
uniformly on $[-A, A]$.
\end{lemma}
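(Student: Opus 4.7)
The plan is to eliminate the inverse function $r_\xi^{-1}$ by rewriting $h_\xi(\omega)$ in terms of $z$ using the defining relation $r_\xi(\omega) = z$, and then to show that the argument $\omega = r_\xi^{-1}(z)$ tends to infinity as $\xi \to \infty$ uniformly in $z \in [-A, A]$.

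First I would observe that, on $I_4 = [0, \infty)$, the equation $r_\xi(\omega) = z$ reads $\xi - \omega = z(1+\omega)^\alpha$. Plugging this directly into $h_\xi(\omega) = 1 + \alpha \frac{\xi - \omega}{1+\omega}$ yields the very convenient identity
\[
h_\xi\bigl(r_\xi^{-1}(z)\bigr) = 1 + \alpha z (1+\omega)^{\alpha - 1}, \quad \omega = r_\xi^{-1}(z).
\]
Since $|z| \leq A$ and $\alpha < 1$, this already reduces the task to proving that $\omega = r_\xi^{-1}(z) \to \infty$ uniformly on $[-A, A]$, because then $(1+\omega)^{\alpha - 1} \to 0$ at a rate independent of $z$.

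Next I would establish the uniform lower bound on $\omega$. By Lemma \ref{L:2.6}, $r_\xi$ is strictly decreasing on $I_4$, so $r_\xi^{-1}$ is strictly decreasing on $r_\xi(I_4) = (-\infty, \xi]$. Hence for any $z \in [-A, A]$ (and $\xi > A$), $r_\xi^{-1}(z) \geq r_\xi^{-1}(A) =: \omega_-(\xi)$. The value $\omega_-(\xi)$ is the unique nonnegative solution of $\xi - \omega_-(\xi) = A(1+\omega_-(\xi))^\alpha$, which in particular satisfies $\omega_-(\xi) \leq \xi$ and therefore
\[
\omega_-(\xi) \geq \xi - A(1+\xi)^\alpha.
\]
Since $\alpha < 1$, the right-hand side tends to $+\infty$ (in fact $\omega_-(\xi) \geq \xi/2$ once $\xi$ is large enough that $A(1+\xi)^\alpha \leq \xi/2$).

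Putting the two ingredients together, for all sufficiently large $\xi$ and all $z \in [-A, A]$,
\[
\bigl| h_\xi(r_\xi^{-1}(z)) - 1 \bigr| \;=\; \alpha |z|\, (1+\omega)^{\alpha-1} \;\leq\; \alpha A\, (1+\omega_-(\xi))^{\alpha-1} \;\longrightarrow\; 0,
\]
which is the claimed uniform convergence. The only subtle point is the self‑consistent size of $\omega_-(\xi)$, but because Lemma \ref{L:2.6} already grants us monotonicity of $r_\xi$ on $I_4$ and we need merely $\omega_-(\xi) \to \infty$ (no sharp rate), the one-step bound $\omega_-(\xi) \geq \xi - A(1+\xi)^\alpha$ is enough; no bootstrap or implicit-function argument is required.
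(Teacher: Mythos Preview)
Your argument is correct. The identity $h_\xi(r_\xi^{-1}(z)) = 1 + \alpha z(1+\omega)^{\alpha-1}$ follows immediately from $\xi - \omega = z(1+\omega)^\alpha$, and the monotonicity of $r_\xi$ on $I_4$ together with the crude bound $\omega_-(\xi) \geq \xi - A(1+\xi)^\alpha$ gives exactly the uniform blow-up of $\omega$ you need. The final inequality uses that $t \mapsto (1+t)^{\alpha-1}$ is decreasing, which is implicit but clear.

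As for comparison: the paper does not prove this lemma at all; it simply quotes it from \cite{DahForRau+2008} (Lemma~5.1 and the proof of Theorem~5.2 there). Your proof is therefore a genuine addition rather than a reworking of what the paper does, and it has the virtue of being short and entirely self-contained within the notation already set up in Lemmas~\ref{L:2.5}--\ref{L:2.7}.
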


This yields
\[
\int_{[-A, A]}\ldots \to \int_{[-A, A]} |\hat{\psi}(z)|^2\,dz
\]
for $\xi \to \infty$. Thus
\[
\left| \int_{I_4} | \hat{\psi}(r_{\xi}(\omega)) |^2 \beta(\omega)\,d\omega - \| \psi \|^2 \right| \leq \varepsilon + \frac{\varepsilon}{1 - \alpha}.
\]
Since $\varepsilon$ was arbitrary, we conclude
\[
\int_{I_4} | \hat{\psi}(r_{\xi}(\omega)) |^2 \beta(\omega)\,d\omega \to \| \psi \|^2
\]
for $\xi \to \infty$.

\end{itemize}

We have thus shown
\[
m(\xi) = \int_{I_1}\ldots + \int_{I_2}\ldots + \int_{I_3}\ldots + \int_{I_4}\ldots \longrightarrow \|\psi\|^2
\]
for $\xi \to \infty$, which finally concludes the proof of Theorem \ref{T:2.1}.
\end{proof}

%%%%%%%%%%%%%%%%%%%%%%%%%%%%%%%%%%%%%%%%%%%%%%%%%%%%%

%{\color{red} Concluding remarks?}

The assumptions of Theorem \ref{T:2.1} are in particular satisfied for $\psi \in \mathcal{S}(\R) \subseteq L^2(\R)$, the Schwartz class of infinitely differentiable rapidly decaying functions: their Fourier transforms are again of the same class, thus decay faster than any given polynomial. Since there exist Schwartz functions with compact support, this proves the existence of compactly supported admissible functions for $\alpha$-modulation spaces.

%%%%%%%%%%%%%%%%%%%%%%%%%%%%%%%%%%%%%%%%%%%%%%%%%%%%%%%

\section*{Acknowledgement}

The authors wish to thank S. Dahlke for his invaluable help and numerous improvements.

This work was funded by project BIOTOP, FWF Project Number I-1018-N25.

%%%%%%%%%%%%%%%%%%%%%%%%%%%%%%%%%%%%%%%%%%%%%%%%%%%%%%%

\bibliographystyle{plain}
\bibliography{biotop_coorbit}

%%%%%%%%%%%%%%%%%%%%%%%%%%%%%%%%%%%%%%%%%%%%%%%%
\begin{comment}

%\bibliographystyle{plain}
%\bibliography{bibliography}

\end{comment}
%%%%%%%%%%%%%%%%%%%%%%%%%%%%%%%%%%%%%%%%%%%%%%%%

\end{document}